\newtheorem{theorem}{Theorem}[section]
\newtheorem{lemma}[theorem]{Lemma}
\theoremstyle{definition}
\theoremstyle{remark}
\begin{document}
\title{Some conditions implying normality of operators}
\author[M.S. Moslehian, S.M.S. Nabavi Sales]{M. S. Moslehian$^1$ and S. M. S. Nabavi Sales$^2$}
\address{$^1$ Department of Pure Mathematics, Center of Excellence in
Analysis on Algebraic Structures (CEAAS), Ferdowsi University of
Mashhad, P. O. Box 1159, Mashhad 91775, Iran.}
\email{moslehian@ferdowsi.um.ac.ir and moslehian@ams.org} \urladdr{\url{http://profsite.um.ac.ir/~moslehian/}}
\address{$^{2}$ Department of Pure Mathematics, Ferdowsi University of
Mashhad, P. O. Box 1159, Mashhad 91775, Iran;\newline Tusi
Mathematical Research Group (TMRG); Mashhad, Iran.}
\email{sadegh.nabavi@gmail.com}

\subjclass[2010]{Primary 47B15; Secondary 47B20, 47A30, 46L99.}

\keywords{Aluthge transformation; hyponormal; log-hyponormal; $p$-hyponormal; polar decomposition.}

\begin{abstract}
Let $T\in\mathbb{B}(\mathscr{H})$ and $T=U|T|$ be its polar
decomposition. We proved that (i) if $T$ is log-hyponormal
 or $p$-hyponormal and $U^n=U^\ast$ for some $n$, then $T$ is
 normal; (ii) if the spectrum of $U$ is contained in some open
 semicircle, then $T$ is normal if and only if so is its Aluthge transform $\widetilde{T}=|T|^{1\over2}U|T|^{1\over2}$.
\end{abstract}
\maketitle

\section{Introduction}

Let $\mathbb{B}(\mathscr{H})$ be the algebra of all bounded linear
operators on a complex Hilbert space $\mathscr{H}$ with the identity
$I$. A subspace $\mathscr{K}\subseteq \mathscr{H}$ is said to reduce
$T\in\mathbb{B}(\mathscr{H})$ if both $T\mathscr{K}\subseteq
\mathscr{K}$ and $T^\ast \mathscr{K}\subseteq \mathscr{K}$ hold. We say
that an operator $T$ is $p$-hyponormal for some $p>0$ if $(T^\ast
T)^p\geq (TT^\ast)^p$. If $p=1$, $T$ is said to be hyponormal. Clearly
$T$ is hyponormal if and only if $\|T \xi\|\geq\|T^\ast \xi\|$ for
any $\xi\in\mathscr{H}$. If $T$ is an invertible operator satisfying
$\log(T^\ast T)\geq \log(TT^\ast)$, then it is called log-hyponormal, see \cite{TAN}.\\
Let $T=U|T|$ be the polar decomposition of $T$, where
$\ker(U)=\ker(|T|)$ and $U^\ast U$ is the projection onto
$\overline{{\rm ran}(|T|)}$. It is known that if $T$ is invertible,
then $U$ is unitary and $|T|$ is also invertible. It is easy to see
that
\begin{eqnarray}\label{H1}
|T^\ast|^s=U|T|^sU^\ast\end{eqnarray} for every nonnegative number
$s$. If $T$ is invertible, then
\begin{eqnarray}\label{H2}\log|T^\ast|=U(\log |T|)U^\ast.\end{eqnarray}
The Aluthge transformation $\widetilde{T}$ of $T$ is defined by
$\widetilde{T}:=|T|^{1\over2}U|T|^{1\over2}$. This notion was first introduced by Aluthge
\cite{ALU} and is a powerful tool in the operator theory. The reader is
referred to \cite{FUR} for undefined notions and terminology.

One interesting problem in the operator theory is to investigate some conditions under
which certain operators are normal. Several mathematicians have
paid attention to this problem, see \cite{ALU, ALW,JTU,OKU} and
references therein. One of interesting articles, which presents
some results about this topic is that of Stampfli \cite{STA}. He showed, among other things,  that
for a hyponormal operator $A$, if $A^n$ is normal for some positive
integer $n$, then $A$ is normal. The problem had already been considered in the case when $n=2$ by Putnam \cite{PUT}. The
results were generalized later to the other classes of operators by a
number of authors, for instance, Embry \cite{EMB}, Radjavi and
Rosenthal \cite{RAR} and Duggal \cite{DUG}. There is another point
of view about this issue via spectrum ${\rm sp}(\cdot)$. In \cite{STA} it is proved
that if the spectrum of a hyponormal operator contains only a finite
number of limited points or has zero area, then the operator is
normal. Using Aluthge transform, this aspect is generalized to
$p$-hyponormal and $\log$-hyponormal operators. In fact, if $T$ is
$p$-hyponormal or $\log$-hyponormal, then $\widetilde{\widetilde{T}}$ is
hyponormal \cite[Theorem 1.3.4.1 and Theorem 2.3.4.2]{FUR}. Due to
${\rm sp}(A)={\rm sp}(\widetilde{\widetilde{A}})$ \cite[Corollary 2.3]{ALW},
$\widetilde{\widetilde{A}}$ is normal. Now the result is concluded from the
fact that $\widetilde{\widetilde{A}}$ is normal if and only if so is $A$
\cite[Lemma 3]{UCT}. There are some applications of the subject in other areas of the
operator theory that was a motivation for our work, see \cite{OKU}.

In this paper we present some new conditions under which cartain
operators are normal. We also use a Fuglede--Putnam commutativity
type theorem to show that an invertible operator $T=U|T|$, where ${\rm sp}(U)$ is contained in an open semicircle, is normal if and
only if so is $\widetilde{T}$.

\section{Main results}
We start this section with one of our main result.

\begin{theorem}\label{t2.1}
Let $T\in\mathbb{B}(\mathscr{H})$ be $\log$-hyponormal or
$p$-hyponormal  and $T=U|T|$ be the polar decomposition of $T$ such
that $U^{n_0}=U^\ast$ for some positive integer $n_0$. Then $T$ is
normal.
\end{theorem}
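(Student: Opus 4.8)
The plan is to reduce both hypotheses to a single order inequality of the form $A\ge UAU^\ast$ and then to exploit the relation $U^{n_0}=U^\ast$ to force equality. First I would unify the two cases. In the $\log$-hyponormal case $T$ is invertible, so $U$ is unitary; since $\log(T^\ast T)=2\log|T|$ and $\log(TT^\ast)=2\log|T^\ast|$, the hypothesis reads $\log|T|\ge\log|T^\ast|$, and combined with \eqref{H2} this gives $A\ge UAU^\ast$ with $A:=\log|T|$. In the $p$-hyponormal case the hypothesis $(T^\ast T)^p\ge(TT^\ast)^p$ reads $|T|^{2p}\ge|T^\ast|^{2p}$, and \eqref{H1} with $s=2p$ turns this into $A\ge UAU^\ast$ with $A:=|T|^{2p}\ge0$. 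Thus in either case I have a self-adjoint $A$ satisfying $A\ge UAU^\ast$.

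The next step is the iteration. Since conjugation $X\mapsto UXU^\ast$ by any bounded operator preserves the order on self-adjoint operators (because $U(A-B)U^\ast\ge0$ whenever $A\ge B$), applying it repeatedly to $A\ge UAU^\ast$ yields the non-increasing chain
\begin{equation*}
A\ge UAU^\ast\ge U^2AU^{\ast2}\ge\cdots\ge U^{k}AU^{\ast k}\ge\cdots.
\end{equation*}
The role of the hypothesis $U^{n_0}=U^\ast$ is to make this chain periodic. Indeed $U^{n_0+1}=U\cdot U^{n_0}=UU^\ast$ and also $U^{n_0+1}=U^{n_0}\cdot U=U^\ast U$, so $UU^\ast=U^\ast U=:P$ is a projection and $U^{n_0+1}=P$. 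Because $P=U^\ast U$ is the projection onto $\overline{{\rm ran}(|T|)}$ while $A$ is a function of $|T|$ vanishing on $\ker(|T|)$, one has $PAP=A$, and therefore $U^{n_0+1}AU^{\ast(n_0+1)}=PAP=A$. Hence the chain returns to its starting point after $n_0+1$ steps.

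From here the conclusion is immediate: a non-increasing chain of self-adjoint operators whose last term equals its first must be constant, since $A-U^{n_0+1}AU^{\ast(n_0+1)}=0$ is a sum of the positive operators $U^{k}AU^{\ast k}-U^{k+1}AU^{\ast(k+1)}$, forcing each to vanish. In particular $A=UAU^\ast$. Undoing the substitution gives $\log|T|=\log|T^\ast|$ in the first case and $|T|^{2p}=|T^\ast|^{2p}$ in the second; applying the injective functional calculus ($\exp$, respectively $t\mapsto t^{1/(2p)}$) yields $|T|=|T^\ast|$, that is $T^\ast T=TT^\ast$, so $T$ is normal.

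I expect the only genuine subtlety to be the $p$-hyponormal case, where $T$ need not be invertible and $U$ is merely a partial isometry rather than a unitary. The two points to handle with care there are that $U^{n_0}=U^\ast$ still forces $U$ to be a normal partial isometry (so that $U^{n_0+1}$ is an honest projection $P=U^\ast U=UU^\ast$), and that $PAP=A$ holds precisely because the support projection of $A=|T|^{2p}$ equals $P$. Both observations let the telescoping argument proceed verbatim without assuming invertibility, which is why it is worth phrasing the preservation of order for an arbitrary bounded $U$ rather than only for unitaries.
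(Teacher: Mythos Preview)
Your proposal is correct and follows essentially the same route as the paper's proof: both iterate the basic inequality $A\ge UAU^\ast$ (with $A=|T|^{2p}$ or $A=\log|T|$) to produce a decreasing chain that closes up at step $n_0+1$ because $U^{n_0+1}=U^\ast U$ is the support projection of $A$. Your write-up adds slightly more detail---noting that $U^{n_0}=U^\ast$ forces $UU^\ast=U^\ast U$ and spelling out the telescoping reason the chain is constant---but the argument is the same.
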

\begin{proof}
Assume that $T$ is $p$-hyponormal for some $p>0$. Hence
$|T|^{2p}\geq|T^\ast|^{2p}=U|T|^{2p}U^\ast$ by \eqref{H1}. By
multiplying both sides of this inequality  by $U$ and $U^\ast$ we
have $U|T|^{2p}U^\ast\geq U^2|T|^{2p}U^{2\ast}$ whence $|T|^{2p}\geq
U|T|^{2p}U^\ast\geq U^2|T|^{2p}U^{2\ast}$. By repeating this
process, we reach the following sequence of operator inequalities:
\begin{eqnarray}\label{2}
|T|^{2p}\geq
 |T^\ast|^{2p}=U|T|^{2p}U^\ast\geq U^2|T|^{2p}U^{2\ast}\geq\ldots\geq
U^{n_0+1}|T|^{2p}U^{(n_0+1)\ast}\geq \ldots.
\end{eqnarray}
Because of $U^{n_0}=U^\ast$ we can observe that $U^{n_0+1}=U^\ast
U=U^{( n_0+1)\ast}$ is the projection onto $\overline{ran(|T|)}$.
Hence $U^{n_0+1}|T|^{2p}U^{(n_0+1)\ast}=|T|^{2p}$, from which and
inequalities \eqref{2} we obtain $|T|^{2p}=|T^\ast|^{2p}$. Hence $|T|^2=|T^\ast|^2$, i.e., $T$ is normal as desired.

In the case that $T$ is a $\log$-hyponormal operator
inequalities \eqref{2} are replaced by the inequalities
\begin{eqnarray*}
\log|T|\geq
 \log|T^\ast|=U(\log|T|)U^\ast&\geq& U^2(\log|T|)U^{2\ast}\nonumber\\
 &\geq& \ldots\geq
U^{n_0+1}(\log|T|)U^{(n_0+1)\ast}\geq \ldots
\end{eqnarray*} and the rest of the
proof is similar to argument above.
\end{proof}

We will need the following lemma in the sequel. One can easily prove it by using the fact that $\log(cT)=(\log c)I+\log T$.

\begin{lemma}\label{mos3}
If $T$ and $S$ are two invertible positive operators such that $\log
T\geq \log S$ and $c$ is a positive number, then $\log (cT)\geq \log
(cS)$.
\end{lemma}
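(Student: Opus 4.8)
The plan is to reduce the whole statement to the stated identity $\log(cT)=(\log c)I+\log T$ and then subtract. First I would justify this identity via the continuous functional calculus: since $T$ is positive and invertible, its spectrum ${\rm sp}(T)$ lies in $(0,\infty)$, so $\log$ is continuous on ${\rm sp}(T)$ and $\log T$ is well defined. For $c>0$ the scalar operator $cI$ commutes with $T$, hence $cT=(cI)T$ is again positive and invertible with ${\rm sp}(cT)=c\,{\rm sp}(T)\subseteq(0,\infty)$. Applying the functional calculus to the function $\lambda\mapsto c\lambda$ and using $\log(c\lambda)=\log c+\log\lambda$ on the strictly positive spectrum yields
$$\log(cT)=(\log c)I+\log T.$$

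Next I would apply the same identity to $S$, obtaining $\log(cS)=(\log c)I+\log S$. Subtracting the two relations, the scalar term $(\log c)I$ cancels, so
$$\log(cT)-\log(cS)=\log T-\log S.$$
By hypothesis $\log T\geq\log S$, i.e.\ $\log T-\log S\geq 0$, and therefore $\log(cT)-\log(cS)\geq 0$, which is precisely the desired inequality $\log(cT)\geq\log(cS)$.

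There is no genuine obstacle here; the only step meriting a word of justification is the identity $\log(cT)=(\log c)I+\log T$, and this is immediate once one notes that $cI$ is a central positive scalar and that $\log$ sends the product $c\lambda$ to the sum $\log c+\log\lambda$ throughout ${\rm sp}(T)\subseteq(0,\infty)$. The monotonicity of the operator order under adding the fixed self-adjoint operator $(\log c)I$ to both sides then completes the argument.
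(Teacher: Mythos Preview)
Your argument is correct and is exactly the approach the paper indicates: the paper simply remarks that the lemma follows from the identity $\log(cT)=(\log c)I+\log T$, and you have spelled out precisely this, justifying the identity via the functional calculus and then adding $(\log c)I$ to both sides of $\log T\geq\log S$. There is nothing to add.
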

\begin{theorem}
Let $T\in\mathbb{B}(\mathscr{H})$ be $\log$-hyponormal or
$p$-hyponormal  and $T=U|T|$ be the polar decomposition of $T$ such
that $U^{\ast n}\to I$ or $U^n\to I$ as $n \to
\infty$, where limits are taken in the strong operator topology. Then $T$ is normal.
\end{theorem}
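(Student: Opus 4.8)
The plan is to mimic the monotone-chain argument of Theorem~\ref{t2.1}, replacing the finite collapse $U^{n_0+1}|T|^{2p}U^{(n_0+1)\ast}=|T|^{2p}$ by a limiting argument driven by the strong convergence hypothesis. I treat the $p$-hyponormal case first; the $\log$-hyponormal case is identical with $|T|^{2p}$ replaced by $\log|T|$ and \eqref{H1} by \eqref{H2}. The first thing I would do is reduce both hypotheses to the single condition $U^{\ast n}\to I$ strongly. Since $U$ is a contraction, for every $\xi\in\mathscr{H}$,
\begin{eqnarray*}
\|U^{\ast n}\xi-\xi\|^2=\|U^{\ast n}\xi\|^2-2\,{\rm Re}\langle U^{\ast n}\xi,\xi\rangle+\|\xi\|^2\leq 2\|\xi\|^2-2\,{\rm Re}\langle U^n\xi,\xi\rangle,
\end{eqnarray*}
using $\|U^{\ast n}\xi\|\leq\|\xi\|$ and ${\rm Re}\langle U^{\ast n}\xi,\xi\rangle={\rm Re}\langle U^n\xi,\xi\rangle$. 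If $U^n\to I$ strongly the right-hand side tends to $0$, so $U^{\ast n}\to I$ strongly; the converse follows by applying the same estimate to the contraction $U^\ast$. Hence in either case we may assume $U^{k\ast}\xi\to\xi$ for all $\xi$.

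For the main step, exactly as in \eqref{2}, $p$-hyponormality together with \eqref{H1} produces the decreasing sequence of positive operators $A_k:=U^k|T|^{2p}U^{k\ast}$, so that $A_0=|T|^{2p}\geq A_1=|T^\ast|^{2p}\geq A_2\geq\cdots\geq 0$. For a fixed $\xi$ I would read off the scalar identity $\langle A_k\xi,\xi\rangle=\||T|^p U^{k\ast}\xi\|^2$, a decreasing sequence of non-negative reals. Since $U^{k\ast}\xi\to\xi$ and $|T|^p$ is bounded, $|T|^p U^{k\ast}\xi\to|T|^p\xi$ in norm, whence $\langle A_k\xi,\xi\rangle\to\||T|^p\xi\|^2=\langle A_0\xi,\xi\rangle$. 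A decreasing sequence whose limit equals its largest (initial) term must be constant, so $\langle A_1\xi,\xi\rangle=\langle A_0\xi,\xi\rangle$ for every $\xi$; as $A_0,A_1$ are self-adjoint this forces $A_1=A_0$, i.e. $|T^\ast|^{2p}=|T|^{2p}$. Taking $2p$-th roots gives $|T|=|T^\ast|$, hence $T^\ast T=TT^\ast$ and $T$ is normal.

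In the $\log$-hyponormal case $T$ is invertible, $U$ is unitary, and \eqref{H2} yields the analogous decreasing chain $B_k:=U^k(\log|T|)U^{k\ast}$ with $B_0=\log|T|\geq B_1=\log|T^\ast|\geq\cdots$. The same quadratic-form computation, now using $\langle B_k\xi,\xi\rangle=\langle(\log|T|)U^{k\ast}\xi,U^{k\ast}\xi\rangle\to\langle(\log|T|)\xi,\xi\rangle$, gives $B_1=B_0$, that is $\log|T^\ast|=\log|T|$, and therefore $|T|=|T^\ast|$ by injectivity of $\log$ on invertible positive operators.

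The delicate point is the interaction between strong convergence and the operator order: one cannot take strong limits inside arbitrary operator inequalities. The argument circumvents this by exploiting that $(A_k)$ is monotone and uniformly bounded, so it suffices to track the scalar forms $\langle A_k\xi,\xi\rangle$, through which the strong convergence $U^{k\ast}\xi\to\xi$ can be pushed past the fixed bounded factor $|T|^p$. The only other step needing care is the initial reduction, since it is $U^{k\ast}\xi$ (not $U^k\xi$) that appears in the quadratic form, which is exactly why one must first upgrade the hypothesis $U^n\to I$ to $U^{\ast n}\to I$.
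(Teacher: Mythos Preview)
Your argument is correct and follows the paper's approach: the same monotone chain from Theorem~\ref{t2.1} combined with the strong-limit computation $\||T|^pU^{k\ast}\xi\|\to\||T|^p\xi\|$ to force equality at the first step. Two minor differences are worth noting: you explicitly prove that for the contraction $U$ the hypothesis $U^n\to I$ strongly implies $U^{\ast n}\to I$ strongly (the paper only remarks that ``a similar argument can be used''), and in the $\log$-hyponormal case you pass to the limit directly in the sesquilinear form $\langle(\log|T|)U^{k\ast}\xi,U^{k\ast}\xi\rangle$, whereas the paper first invokes Lemma~\ref{mos3} to shift to a positive operator $\log(c|T|)\geq 0$ so that the square-root-and-norm computation of the $p$-hyponormal case can be repeated verbatim. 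Your treatment of the $\log$ case is slightly cleaner, since boundedness of $\log|T|$ is all that is needed to push the limit through the quadratic form.
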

\begin{proof} We assume that $U^{\ast n}\xi\to \xi$ as $n \to
\infty$ for all $\xi\in\mathscr{H}$. In the case $U^n\to I$ in the strong operator topology
a similar argument can be used. Let $T$ be $p$-hyponormal and
$\xi\in\mathscr{H}$. It follows from \eqref{2} that
\begin{eqnarray}\label{3}
\|\,|T|^{p}\xi\|\geq
\|\,|T^\ast|^{p}\xi\|=\|\,|T|^{p}U^\ast\xi\|\geq
\|\,|T|^{p}U^{2\ast}\xi\|\geq\ldots\geq
\|\,|T|^{p}U^{n\ast}\xi\|\geq \ldots.
\end{eqnarray}
Since
$$|\,\|\,|T|^pU^{\ast
n}\xi\|-\|\,|T|^p\xi\|\,|\leq\|\,|T|^pU^{\ast
n}\xi-|T|^p\xi\|\leq\|\,|T|^p\|\,\|U^{\ast n}\xi-\xi\|\to 0$$ as $n
\to \infty$, we have $\|\,|T|^pU^{\ast n}\xi\|\to\|\,|T|^p\xi\|$ as
$n \to \infty$. Hence, by \eqref{3} we get $\|\,|T|^{p}\xi\|^2=
\|\,|T^\ast|^{p}\xi\|^2$, so $|T|^{2p}= |T^\ast|^{2p}$. Thus $T$ is
normal.

Now let $T$ be a $\log$-hyponormal operator. Since $T$ is invertible
there exists  $c>0$ such that $c|T^\ast|\geq I$, so $\log
(c|T^\ast|)\geq 0$. Due to $\log|T|\geq
\log|T^\ast|=U(\log|T|)U^\ast$ we have  $\log(c|T|)\geq
\log(c|T^\ast|)=U\log(c|T|)U^\ast$ by Lemma
\ref{mos3} and equality \eqref{H2}. The rest of the proof is similar to
 the argument above and the proof of Theorem \ref{t2.1} so we omit it.

\end{proof} In
the sequel we are going to present a relationship between an
operator and its Aluthge transform. We essentially apply the
following lemma.

\begin{lemma}\cite{TAK}
Let $T,S\in\mathbb{B}(\mathscr{H})$. Then the following assertions
are equivalent:

({\rm i}) If $TX=XS$, then $T^\ast X=XS^\ast$ for any
$X\in\mathbb{B}(\mathscr{H})$.

({\rm ii}) If $TX=XS$ where $X\in\mathbb{B}(\mathscr{H})$, then
$\overline{R(X)}$ reduces $T$, $(\ker X)^\bot$ reduces $S$, and
operators $T|_{\overline{R(X)}}\,\,$ and $S|_{(\ker X)^\bot}$ are
normal.
\end{lemma}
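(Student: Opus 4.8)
The plan is to establish the two implications separately, treating (ii)$\Rightarrow$(i) as routine and (i)$\Rightarrow$(ii) as the substantive direction. For (ii)$\Rightarrow$(i), suppose $TX=XS$. By (ii) the subspaces $\mathscr{M}:=\overline{R(X)}$ and $\mathscr{N}:=(\ker X)^\bot$ reduce $T$ and $S$ respectively, so I would write $T=T_1\oplus T_2$ on $\mathscr{M}\oplus\mathscr{M}^\bot$ and $S=S_1\oplus S_2$ on $\mathscr{N}\oplus\mathscr{N}^\bot$ with $T_1,S_1$ normal. Since $R(X)\subseteq\mathscr{M}$ and $X$ vanishes on $\ker X=\mathscr{N}^\bot$, the operator $X$ has a single nonzero block $X_0:=X|_{\mathscr{N}}:\mathscr{N}\to\mathscr{M}$, and reading $TX=XS$ off blockwise yields $T_1X_0=X_0S_1$. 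The classical Fuglede--Putnam theorem applied to the normal operators $T_1,S_1$ gives $T_1^\ast X_0=X_0S_1^\ast$, and reassembling the blocks returns $T^\ast X=XS^\ast$.

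For (i)$\Rightarrow$(ii) I would fix $X$ with $TX=XS$ and first invoke (i) to obtain the companion relation $T^\ast X=XS^\ast$. The reduction statements then follow from elementary range/kernel bookkeeping: from $TX=XS$ one gets $T(R(X))\subseteq R(X)$ and from $T^\ast X=XS^\ast$ one gets $T^\ast(R(X))\subseteq R(X)$, so $\mathscr{M}=\overline{R(X)}$ reduces $T$; dually, both relations force $\ker X$ to be $S$- and $S^\ast$-invariant, so $\mathscr{N}=(\ker X)^\bot$ reduces $S$. The unitary equivalence of the pieces is extracted through the polar decomposition $X=V|X|$: a short computation from the two intertwining relations shows that $X^\ast X$ commutes with $S$ and $XX^\ast$ commutes with $T$, whence $|X|$ commutes with $S$; substituting into $TX=XS$ and clearing $|X|$ gives $TV=VS$, and since $V_0:=V|_{\mathscr{N}}$ is a unitary of $\mathscr{N}$ onto $\mathscr{M}$ this reads $T_1V_0=V_0S_1$, i.e.\ $S_1=V_0^\ast T_1V_0$.

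It remains to prove that $T_1$ (hence, by the unitary equivalence, $S_1$) is normal, and this is the step I expect to be the crux, since everything so far is purely algebraic and only yields unitary equivalence, not normality. The idea is to manufacture a fresh intertwiner and feed it back into hypothesis (i). One observes that $TV$ again intertwines, $T(TV)=(TV)S$, so (i) applies to give $T^\ast(TV)=(TV)S^\ast$; restricting this identity to $\mathscr{N}$ and using that $\mathscr{M}$ reduces $T$ turns the left-hand side into $T_1^\ast T_1V_0$, while the right-hand side, after rewriting $V_0S_1^\ast=T_1^\ast V_0$ (a consequence of $S_1=V_0^\ast T_1V_0$ and unitarity of $V_0$), becomes $T_1T_1^\ast V_0$. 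Because $V_0$ maps onto $\mathscr{M}$, this forces $T_1^\ast T_1=T_1T_1^\ast$. The main obstacle is precisely this normality extraction: the auxiliary operator (here $TV$, or more generally $WV_0$ for any $W$ commuting with $T_1$) must be chosen so that hypothesis (i) converts commutation with $T_1$ into commutation with $T_1^\ast$, which is exactly the content lying beyond the classical Fuglede--Putnam theorem.
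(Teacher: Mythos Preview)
The paper does not supply a proof of this lemma at all: it is quoted verbatim from Takahashi \cite{TAK} and used as a black box, so there is no ``paper's own proof'' to compare against. Your write-up, however, is a correct self-contained argument. The (ii)$\Rightarrow$(i) direction via block decomposition and the classical Fuglede--Putnam theorem is standard, and for (i)$\Rightarrow$(ii) your steps are sound: the two intertwining relations $TX=XS$ and $T^\ast X=XS^\ast$ indeed give the reducing subspaces; the commutation of $|X|$ with $S$ follows from $X^\ast T=SX^\ast$ (adjoint of $T^\ast X=XS^\ast$) combined with $X^\ast X S=X^\ast(TX)=(SX^\ast)X$; and clearing $|X|$ to pass from $TV|X|=VS|X|$ to $TV=VS$ is legitimate because both sides vanish on $\ker|X|$ (using that $\ker X$ is $S$-invariant) and agree on the dense range of $|X|$ in $\mathscr{N}$. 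The normality step---feeding the new intertwiner $TV$ back into hypothesis (i) and reading off $T_1^\ast T_1V_0=T_1T_1^\ast V_0$---is exactly the right idea and is executed cleanly.

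One small remark: your phrase ``the unitary equivalence of the pieces'' is a bonus not actually required by the statement of (ii), which only asks that $T_1$ and $S_1$ be normal; but it does no harm, and in fact it is what lets you deduce normality of $S_1$ from that of $T_1$ in one line.
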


\begin{theorem}
Let $T\in\mathbb{B}(\mathscr{H})$ be an invertible operator and
$T=U|T|$ be the polar decomposition of $T$. Let ${\rm sp}(U)$ be
contained in some open semicircle. Then $\widetilde{T}$ is normal if and
only if so is $T$.
\end{theorem}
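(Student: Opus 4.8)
The plan is to reduce the Aluthge-transform statement to a purely spectral fact about the unitary $U$ (recall that invertibility of $T$ makes $U$ unitary and $|T|$ invertible). First I would dispose of the easy implication: if $T$ is normal, then $|T|^2=|T^\ast|^2=U|T|^2U^\ast$ by \eqref{H1}, so $U$ commutes with $|T|^2$ and hence with $|T|=(|T|^2)^{1/2}$; consequently $\widetilde{T}=|T|^{1/2}U|T|^{1/2}=U|T|=T$, which is normal.

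For the converse I would first record two clean reformulations. Writing $A=|T|^{1/2}$ (positive and invertible), a direct computation gives $\widetilde{T}^\ast\widetilde{T}=A\,U^\ast|T|U\,A$ and $\widetilde{T}\widetilde{T}^\ast=A\,U|T|U^\ast\,A$; removing the invertible factors $A$ shows that $\widetilde{T}$ is normal if and only if $U^\ast|T|U=U|T|U^\ast$, equivalently (multiplying by $U$ on the left and right) if and only if $U^2$ commutes with $|T|$. By the same token, and using \eqref{H1}, $T$ is normal if and only if $U$ itself commutes with $|T|$. Thus the theorem is equivalent to the implication: if $U^2$ commutes with $|T|$ and ${\rm sp}(U)$ lies in an open semicircle, then $U$ commutes with $|T|$.

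The role of the semicircle hypothesis is now transparent. If ${\rm sp}(U)\subseteq\{e^{it}:\alpha<t<\alpha+\pi\}$, then the squaring map $z\mapsto z^2$ is injective on ${\rm sp}(U)$, since $e^{2it_1}=e^{2it_2}$ with $|t_1-t_2|<\pi$ forces $t_1=t_2$. Being a continuous injection on the compact set ${\rm sp}(U)$, it is a homeomorphism onto ${\rm sp}(U^2)$ (spectral mapping), so its inverse $\psi$ is continuous and the composition rule of the continuous functional calculus yields $\psi(U^2)=U$. Since $|T|$ commutes with $U^2$, it commutes with $U^{2\ast}$ and hence with every element of the $C^\ast$-algebra generated by $U^2$; in particular it commutes with $\psi(U^2)=U$, so $T$ is normal.

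I expect the main obstacle to be the rigorous justification that $U$ is a continuous function of $U^2$, i.e. the continuity of $\psi$ on ${\rm sp}(U^2)$; this is exactly the point at which the strict semicircle condition is indispensable, as the example $U^2=I$ with ${\rm sp}(U)=\{1,-1\}$ (a full diameter, not contained in any open semicircle) shows that squaring need not be injective and the conclusion genuinely fails. An alternative organization, closer to the Fuglede--Putnam-type lemma stated above, rests on the intertwining identity $\widetilde{T}\,|T|^{1/2}=|T|^{1/2}\,T$: since $|T|^{1/2}$ is invertible its kernel is trivial and its range is all of $\mathscr{H}$, so once the hypothesis of that lemma is verified for the pair $(\widetilde{T},T)$ its structural conclusion (ii) forces $T$ to be normal on all of $\mathscr{H}$, and verifying that hypothesis is precisely where the semicircle condition re-enters.
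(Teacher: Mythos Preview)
Your primary argument is correct and is genuinely more direct than the paper's. The paper proceeds indirectly: it shows that, assuming $\widetilde{T}$ is normal, the operator $T$ enjoys the full Fuglede--Putnam property ($TX=XT\Rightarrow T^\ast X=XT^\ast$ for \emph{every} $X$), and only then invokes Lemma~2.4 with $X=I$ to get normality. Concretely, from $TX=XT$ the paper conjugates to obtain $\widetilde{T}(|T|^{1/2}X|T|^{-1/2})=(|T|^{1/2}X|T|^{-1/2})\widetilde{T}$, applies Fuglede--Putnam for $\widetilde{T}$, and unwinds to reach $U^2X=XU^2$; the passage from $U^2X=XU^2$ to $UX=XU$ is then done via the Beck--Putnam spectral-measure argument (completeness of $\{e^{int}\}$). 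You bypass all of this by observing directly that $\widetilde{T}$ normal $\Leftrightarrow [U^2,|T|]=0$ and $T$ normal $\Leftrightarrow [U,|T|]=0$, reducing the theorem to the single implication $[U^2,|T|]=0\Rightarrow[U,|T|]=0$, which follows at once from $U=\psi(U^2)$ via continuous functional calculus. Both arguments rest on the same spectral fact (squaring is injective on an open semicircle), but your route is shorter and more conceptual; the paper's route, on the other hand, establishes the stronger intermediate fact that $T$ satisfies the Fuglede--Putnam commutativity property, which may be of independent interest even though it is not needed for the stated conclusion.
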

\begin{proof} Assume that $\widetilde{T}$ is normal. Hence $\widetilde{T}X=X\widetilde{T}$ implies $\widetilde{T}^\ast
X=X\widetilde{T}^\ast$ for any $X\in\mathbb{B}(\mathscr{H})$ by
Fuglede--Putnam commutativity theorem. We first show that $TX=XT$
implies $T^\ast X=XT^\ast$ for any $X\in\mathbb{B}(\mathscr{H})$.
Let $X\in \mathbb{B}(\mathscr{H})$ and $TX=XT$. Then $U|T|X=XU|T|$,
whence
\begin{eqnarray}\label{6}
\widetilde{T}(|T|^{1\over2}X|T|^{-1\over2})&=&
|T|^{1\over2}(U|T|^{1\over2}|T|^{1\over2}X)|T|^{-1\over2}\nonumber\\&=&
|T|^{1\over2}(X|T|^{-1\over2}|T|^{1\over2}U|T|)|T|^{-1\over2}\nonumber\\
&=&(|T|^{1\over2}X|T|^{-1\over2})\widetilde{T}.
\end{eqnarray}
By \eqref{6} and the assumption  with $|T|^{1\over2}X|T|^{-1\over2}$
instead of $X$ we have
\begin{eqnarray*}
|T|^{1\over2}U^\ast|T|X|T|^{-1\over2}&=&|T|^{1\over2}U^\ast|T|^{1\over2}(|T|^{1\over2}X|T|^{-1\over2})=
\widetilde{T}^\ast(|T|^{1\over2}X|T|^{-1\over2})\\&=&(|T|^{1\over2}X|T|^{-1\over2})\widetilde{T}^\ast
=|T|^{1\over2}X|T|^{-1\over2}|T|^{1\over2}U^\ast|T|^{1\over2}\\&=&|T|^{1\over2}XU^\ast|T|^{1\over2}.
\end{eqnarray*}
So that
\begin{eqnarray}\label{8}
U^\ast|T|X&=&XU^\ast|T|
\end{eqnarray}
and $|T|X|T|^{-1}=UXU^\ast$. Therefore
$$|T|X|T|^{-1}=U^\ast(U|T|X)|T|^{-1}=U^\ast(XU|T|)|T|^{-1}=U^\ast
XU\,.$$
Thus $UXU^\ast=U^\ast XU$,  whence $U^2X=XU^2$.

Now we use Beck and Putnam argument in \cite{BEP}. We replace $U$ by $e^\alpha U$ if
it is necessary  and assume that ${\rm sp}(U)$ is contained in the
set $\{e^{i\lambda}: \varepsilon<\lambda<\pi-\varepsilon\}$ for some
$\varepsilon>0$. Let
$U=\int_\varepsilon^{\pi-\varepsilon}e^{i\lambda} dE(\lambda)$ be
the spectral decomposition of $U$. One has
$U^2=\int_{2\varepsilon}^{2\pi-2\varepsilon}e^{i\lambda}
dF(\lambda)$, where $F(\lambda)=E({\lambda\over2})$. By $U^2X=XU^2$
we have $U^{2n}X=XU^{2n}$ for every $n\in\mathbb{Z}$,
 so $U^{2n}=\int_{2\varepsilon}^{2\pi-2\varepsilon}e^{in\lambda}
 dF(\lambda)$. Hence
 $f(U^2)X=Xf(U^2)$ for every $f$ in the
set of all bounded Borel-measurable complex-valued functions on
$\{z:|z|=1\}$ since $\{e^{int}\}$ is complete on the interval $0\leq
t\leq2\pi$. Hence, by spectral resolution for normal operator $U$,
$F(\lambda)X=XF(\lambda)$, whence $E(\lambda)X=XE(\lambda)$ and this
implies again that $UX=XU$ and clearly this implies that
\begin{eqnarray}\label{13}
U^\ast X&=&XU^\ast.
\end{eqnarray}
From \eqref{8} and \eqref{13} we obtain
\begin{eqnarray}\label{14}
|T|X=U(U^\ast|T|X)&=&U(XU^\ast|T|)=U(U^\ast X)|T|=X|T|.
\end{eqnarray}
From \eqref{13} and \eqref{14} we deduce that $T^\ast X=|T|U^\ast
X=X|T|U^\ast=XT^\ast$ as desired. We have shown that $TX=XT$ implies
$T^\ast X=XT^\ast$ for any $X\in\mathbb{B}(\mathscr{H})$. It follows
from Lemma 2.4({\rm ii}) for $X=I$  that $T$ is normal.

The reverse is easy. In fact if $T$ is normal, then $\widetilde{T}=T$.
\end{proof}

\textbf{Acknowledgement.} The authors would like to sincerely thank
the referee for very useful comments and suggestions.

\end{document}